\newcommand {\theoremstyle} [1] { }
\newenvironment{proof}{{\noindent\it\underline{Proof}}:}{\hfill$\Box$}
\newtheorem{thm}{Theorem}[section]
 \theoremstyle{plain}
 \newtheorem{lem}[thm]{Lemma} 
 \newtheorem{cor}[thm]{Corollary}
 \theoremstyle{definition}
 \theoremstyle{remark}
 \newtheorem{rem}[thm]{Remark}
 \newtheorem{exa}[thm]{Example}
\def\R{\mathbb{R}}
\title{{\bf\Large Multiplicity of Periodic Solutions  for  Dynamic Li\'enard Equations with  Delay  and Singular $\varphi$-laplacian of Relativistic Type}}
\author{{\large P. Amster, M. P. Kuna  and D. P. Santos }\hspace{2mm}
{\bf\large}\vspace{1mm}\\
{\small Departamento de Matem\'atica. Facultad de Ciencias Exactas y Naturales.}\\ 
{\small Universidad de Buenos Aires \& IMAS-CONICET.}\\
{\small Ciudad Universitaria. Pabell\'on I,(1428), Buenos Aires, Argentina.}\\
{\small E-mails: pamster@dm.uba.ar -- mpkuna@dm.uba.ar -- dsantos@dm.uba.ar}
\vspace{3mm}
}
\date{}
\begin{document}
\maketitle

\begin{abstract}
  We study the existence and multiplicity of periodic solutions  for singular $\varphi$-laplacian Li\'enard-like equations with delay on time scales. We prove the existence of multiple solutions  using  topological methods based on the Leray-Schauder degree. A special case is the 
  $T$-periodic problem for the forced pendulum equation and the sunflower equation with relativistic effects.

\end{abstract}
 
 \medskip

\noindent 
Mathematics Subject Classification (2010). 34N05; 34C25; 47H11. 

\noindent 
Key words: Functional dynamic equations, Leray-Schauder degree, periodic solutions, continuation theorem, time scales.

 \medskip
 \noindent


\section{Introduction}
In this work, we study  the existence and multiplicity of $T$-periodic solutions $x:\mathbb{T} \rightarrow \mathbb{R}$ to the following equation with delay on time scales
\begin{equation}\label{eq1}
(\varphi(x^{\Delta}(t)))^{\Delta} +h(x(t))x^{\Delta}(t)+ g(x(t-r)) =p(t)  \ \ \ t\in \mathbb{T},
\end{equation}
where ${\mathbb{T}}$ is an arbitrary $T$-periodic nonempty closed subset of  $\mathbb{R}$ (\textit{time scale}), $\varphi:(-a,a)\rightarrow \mathbb{R}$ is an
increasing homeomorphism  with $0<a<+\infty$ such that $\varphi(0) = 0$, and  $h,g:\mathbb{R} \rightarrow \mathbb{R}$ are continuous  functions. Moreover, we assume that  $r\ge 0$ and $T>0$ are  real numbers, and  that $p(t+T)=p(t)$ is continuous in $\mathbb{T}$ with $\overline p:=\frac{1}{T}\int_0^T p(t)\Delta t=0$. 
 
The time scales theory was introduced in 1988, in the PhD thesis of  Stefan Hilger \cite{hilgerthe}, as an attempt to unify discrete and continuous calculus.
 The time scale $\mathbb R$ corresponds to the continuous case and, hence, yields
 results for ordinary differential equations. If the time scale is ${\mathbb{Z}}$, 
then the results apply to difference equations. However, the generality of the set $\mathbb T$ produces many different situations in which the time scales formalism is useful  in several applications. For example,   in the study of hybrid discrete-continuous dynamical systems, see \cite{bo1}.

The methods usually employed to explore the existence of periodic solutions for dynamic equations in time scales are: fixed point theory \cite{kau,lw}, Mawhin’s continuation theorem \cite{che9,lh}, lower and upper solutions \cite{ste,gu} and variational methods \cite{gu1}, \cite{ti}, \cite{yu}, among many other works. 
{Some of the above cited references correspond to the {semilinear} case, that is, $\varphi(x)=x$ and some others to the \textit{$p$-laplacian operator}, namely $\varphi_p(x):= |x|^{p-2}x$. 
However, the literature concerning singular $\varphi$-laplacian operators in time scales is more scarce.}
 A special case of  (\ref{eq1}) with $\mathbb T=\R$ is the forced pendulum equation with relativistic effects, namely,

\begin{equation}\label{int}
 \left(\frac{x{'}}{\sqrt{1-\frac{x{'^2}}{c^{2}}}}\right)^\prime +kx{'}(t)+ b\sin x =p(t),  \ \ \ t\in \mathbb{R},
\end{equation}
where $c >0$ is the speed of light in the vacuum, $k > 0$ is a possible viscous friction
coefficient and $p$ is a continuous and $T$-periodic forcing term with mean value zero. This equation has received much attention  by several authors, see e.g. \cite{bre,man11,torres}. In  particular  in  \cite{torres}, {employing the Schauder fixed point theorem, Torres proved the existence of at least one $T$-periodic solution, provided that $2cT\le 1$. 
{This result was later improved in \cite{torres2} and finally in 
\cite{bjm}, where  the sharper condition  
$cT < \sqrt{3}\pi$} was obtained.} 
In the recent paper \cite{cid}, a new improvement was obtained in terms of $k$ and $\|p\|_{L^1}$ and allows to obtain the uniform condition $cT\le 2\pi$. 



 \smallskip

In this work, we generalize several aspects of the results in \cite{bjm} and  \cite{torres, torres2}. On the one hand, our problem consist of dynamical Li\'enard-like equations on time scales;
on the other hand, the functions $f, g$ are  general and the equation may also include a delay. This implies that  the use of the Poincar\'e operator does not reduce the problem to a finite-dimensional one, and requires the use of  accurate topological methods such as  the Leray-Schauder degree. 
{Moreover, our main theorem is in fact a multiplicity result, which intuitively can be motivated as follows. If we observe for example problem (\ref{int}), 
it is clear that the 
periodicity implies that if $x$ is a $T$-periodic solution, then $x+2k\pi$ is also a 
$T$-periodic solution for all $k\in \mathbb Z$. Such  solutions are usually called in the literature \textit{geometrically equivalent}. However, if the term $kx'$ is replaced by $h(x)x'$ for some continuous function $h$ close to a constant, then the problem still admits infinitely many solutions, which may be geometrically distinct if $h$ is not a $2\pi$-periodic function. 
With this idea in mind, it shall be shown that if the nonlinear term has a more general oscillatory behaviour, then  multiple solutions exist.  }


\medskip

More specifically, our main result reads as follows:

\begin{thm}
Assume that there exists  a strictly increasing sequence $\left\{\alpha_{j}\right\}_{j=0}^n$ 
such that 
$$
    (-1)^j\int_0^T h(x(t))x^{\Delta}(t)+g(x(t))\Delta t < 0   \hbox{ if  } \; x(0)= \alpha_{j}, \;    \left\|x^{\Delta}\right\|_{\infty} < a. 
$$
for every $j$ 
and each $C^1$ and $T$-periodic function   $x(t)$.
 {Then, for any continuous $T$-periodic function $p(t)$ with mean value zero, problem (\ref{eq1})  has at least 
  $n$ different  $T$-periodic solutions.}
  \end {thm}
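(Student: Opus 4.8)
The plan is to recast (\ref{eq1}) as a fixed point problem and apply the Leray--Schauder degree along a homotopy that deforms the equation to an autonomous, finite-dimensional one. Write $N(x)(t):=p(t)-h(x(t))x^\Delta(t)-g(x(t-r))$ and denote by $\overline{u}:=\frac1T\int_0^T u(t)\,\Delta t$ the mean of a $T$-periodic $u$. For $\lambda\in[0,1]$ I consider the family $(\varphi(x^\Delta))^\Delta=\lambda N(x)$ together with the solvability constraint $\overline{N(x)}=0$, and, following the Man\'asevich--Mawhin continuation scheme adapted to time scales (as set up in the previous section), I encode these in a single completely continuous operator $\mathcal M_\lambda$ on the space $C^1_T$ of $T$-periodic, delta-differentiable functions. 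Its construction inverts $(\varphi(\cdot^\Delta))^\Delta$ through $\varphi^{-1}$ and two normalizing constants (one enforcing periodicity of $x$, one fixing the mean of the recovered primitive), so that $x=\mathcal M_\lambda(x)$ holds iff $x$ solves the $\lambda$-problem with $\overline{N(x)}=0$; in particular the fixed points of $\mathcal M_1$ are exactly the $T$-periodic solutions of (\ref{eq1}). Two features drive everything: first, since $\varphi^{-1}$ takes values in $(-a,a)$, every point of the range of $\mathcal M_\lambda$ satisfies $\|\mathcal M_\lambda(x)^\Delta\|_\infty<a$ automatically, which both yields compactness (via Arzel\`a--Ascoli) and furnishes the a priori derivative bound for free; second, at $\lambda=0$ the operator collapses to $\mathcal M_0(x)=\overline x+\overline{N(x)}$, whose fixed points are the constants $c$ with $\overline{N(c)}=0$, i.e.\ with $g(c)=0$.

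Next I translate the hypothesis into a statement about $\overline{N(x)}$. Because $x$ is $T$-periodic and the time scale is $T$-periodic, the period integral is shift invariant, so $\int_0^T g(x(t-r))\,\Delta t=\int_0^T g(x(t))\,\Delta t$; together with $\overline p=0$ this gives $\int_0^T\big(h(x)x^\Delta+g(x)\big)\,\Delta t=-T\,\overline{N(x)}$. Hence the assumed inequality is equivalent to
\[
(-1)^j\,\overline{N(x)}>0\qquad\text{whenever } x(0)=\alpha_j,\ \|x^\Delta\|_\infty<a,
\]
for every $C^1$ $T$-periodic $x$. In particular, evaluating on constants $x\equiv\alpha_j$ shows $(-1)^j g(\alpha_j)<0$, so $g$ takes opposite signs at the endpoints of each interval $(\alpha_{j-1},\alpha_j)$.

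The multiplicity then comes from $n$ disjoint regions. For $j=1,\dots,n$ set
\[
\Omega_j:=\{\,x\in C^1_T:\ \alpha_{j-1}<x(0)<\alpha_j,\ \|x^\Delta\|_\infty<a\,\},
\]
which are open, pairwise disjoint, and bounded (the derivative bound plus the bound on $x(0)$ control $\|x\|_\infty$). I claim $\mathcal M_\lambda$ has no fixed point on $\partial\Omega_j$ for any $\lambda\in[0,1]$. The boundary splits into the part where $\|x^\Delta\|_\infty=a$, which contains no fixed point since every fixed point satisfies the strict bound $\|x^\Delta\|_\infty<a$, and the part where $x(0)\in\{\alpha_{j-1},\alpha_j\}$ with $\|x^\Delta\|_\infty\le a$; on the latter a fixed point would satisfy $\overline{N(x)}=0$ while the reformulated hypothesis forces $(-1)^{j-1}\overline{N(x)}>0$ or $(-1)^j\overline{N(x)}>0$, a contradiction. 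By homotopy invariance of the Leray--Schauder degree, $\deg(I-\mathcal M_1,\Omega_j,0)=\deg(I-\mathcal M_0,\Omega_j,0)$.

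Finally I compute the degree at $\lambda=0$. Since $\mathcal M_0$ maps $C^1_T$ into the one-dimensional space of constants, the reduction property of the degree gives $\deg(I-\mathcal M_0,\Omega_j,0)=\deg_B\big((I-\mathcal M_0)|_{\mathbb R},\,(\alpha_{j-1},\alpha_j),\,0\big)$, and on constants $(I-\mathcal M_0)(c)=c-(c+\overline{N(c)})=-\overline{N(c)}=g(c)$. As $g$ changes sign across $(\alpha_{j-1},\alpha_j)$, this Brouwer degree equals $\pm1\neq0$, so $\mathcal M_1$ has a fixed point in each $\Omega_j$; these lie in disjoint $x(0)$-ranges and are therefore $n$ distinct $T$-periodic solutions of (\ref{eq1}). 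The genuinely delicate part is the analytic foundation of the first step --- constructing $\mathcal M_\lambda$ on the time scale with a singular $\varphi$-Laplacian and a delay, and verifying its complete continuity together with the fixed-point/solution correspondence --- while the degree-theoretic core, namely the choice of the slabs $\Omega_j$ keyed to $x(0)$ and the alternating-sign mechanism, is then short. A point to handle with care is the shift invariance $\int_0^T g(x(\cdot-r))\,\Delta t=\int_0^T g(x)\,\Delta t$ on a general $T$-periodic time scale, which is exactly what makes the delayed term compatible with the delay-free hypothesis.
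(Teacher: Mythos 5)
Your proposal is correct and follows essentially the same route as the paper: the same continuation operator from Section 3, the same slabs $\Omega_j$ keyed to $x(0)$ with the a priori bound $\|x^\Delta\|_\infty<a$ coming from the singularity of $\varphi$, the same homotopy to $\lambda=0$, and the same reduction to the Brouwer degree of $g$ on $(\alpha_{j-1},\alpha_j)$, which is $\pm1$ by the alternating signs $(-1)^jg(\alpha_j)<0$. The only difference is cosmetic (your indices are shifted by one) plus the welcome fact that you make explicit the shift-invariance $\int_0^T g(x(t-r))\,\Delta t=\int_0^T g(x(t))\,\Delta t$ on the $T$-periodic time scale, which the paper uses tacitly when passing from the delayed integral identity to hypothesis (\ref{condicion1}).
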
  

In particular, if $g$ 
is oscillatory over $\mathbb R$ and $h$ is locally monotone or locally close to a constant,
then (\ref{eq1})  has infinitely many different  $T$-periodic solutions, provided that the oscillations are sufficiently slow. More precisely, the following corollaries are obtained:

\begin{cor}
Assume that   there exists  a strictly increasing sequence $\left\{\alpha_{j}\right\}_{j=0}^n$ such that
\begin{center}
$(-1)^j g>0$ and $(-1)^jh$  is nonincreasing over $(\alpha_j-\frac {aT}2,\alpha_{j}+\frac {aT}2)$.    
\end{center}         
 Then, for any continuous $T$-periodic function $p(t)$ with mean value zero, problem (\ref{eq3}) has at least  $n$ different $T$-periodic solutions. 
\end{cor}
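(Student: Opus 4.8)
The plan is to derive the Corollary from the preceding Theorem by verifying its integral sign condition from the localized hypotheses on $g$ and $h$. Fix $j$ and let $x$ be any $C^1$, $T$-periodic function with $x(0)=\alpha_j$ and $\|x^\Delta\|_\infty<a$, and write $I_j:=(\alpha_j-\frac{aT}2,\alpha_j+\frac{aT}2)$. First I would show that the whole orbit of $x$ stays inside $I_j$, which is exactly the interval on which the Corollary's assumptions are available. Since $x$ is $T$-periodic, taking points where $x$ attains its extrema on $[0,T]_{\mathbb T}$ and estimating the oscillation by travelling around the period in both directions gives
\begin{equation*}
\max x-\min x\le \tfrac12\int_0^T|x^\Delta(t)|\,\Delta t<\tfrac12\,aT ,
\end{equation*}
where I used $\int_0^T\Delta t=T$ and $\|x^\Delta\|_\infty<a$. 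As $x(0)=\alpha_j$ lies between $\min x$ and $\max x$, this forces $\min x>\alpha_j-\frac{aT}2$ and $\max x<\alpha_j+\frac{aT}2$, so $x(t)\in I_j$ for all $t$.

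With the orbit confined to $I_j$, the term involving $g$ is immediate: since $(-1)^jg>0$ on $I_j$, the integrand $(-1)^jg(x(t))$ is strictly positive for every $t$, so $\int_0^T g(x(t))\,\Delta t$ is one-signed, with the sign fixed by the parity of $j$.

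The delicate term is $\int_0^T h(x(t))x^\Delta(t)\,\Delta t$, which on a general time scale need not vanish (when $\mathbb T=\R$ it equals $H(x(T))-H(x(0))=0$ for any antiderivative $H$ of $h$). The idea is to compare $h(x(t))x^\Delta(t)$ against $(H\circ x)^\Delta(t)$, whose $\Delta$-integral over a period is $H(x(T))-H(x(0))=0$. By the time scales chain rule,
\begin{equation*}
(H\circ x)^\Delta(t)=\Big(\int_0^1 h\big(x(t)+\lambda\mu(t)x^\Delta(t)\big)\,d\lambda\Big)\,x^\Delta(t),
\end{equation*}
and subtracting the two integrands yields
\begin{equation*}
\int_0^T h(x)x^\Delta\,\Delta t=\int_0^T\!\!\int_0^1\big[h(x(t))-h\big(x(t)+\lambda\mu(t)x^\Delta(t)\big)\big]\,d\lambda\;x^\Delta(t)\,\Delta t .
\end{equation*}
For each $t$ and each $\lambda\in[0,1]$ the point $x(t)+\lambda\mu(t)x^\Delta(t)$ is a convex combination of $x(t)$ and $x(\sigma(t))=x(t)+\mu(t)x^\Delta(t)$, hence lies in $I_j$, and it differs from $x(t)$ by a quantity with the sign of $x^\Delta(t)$ (as $\mu,\lambda\ge0$). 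Therefore the monotonicity of $(-1)^jh$ on $I_j$ makes every bracketed term one-signed once multiplied by $x^\Delta(t)$, so $(-1)^j\int_0^T h(x)x^\Delta\,\Delta t$ has a constant sign, independent of $x$ (and it collapses to the classical identity at right-dense points, where $\mu=0$).

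Putting the two contributions together, both $\int_0^T g(x)\,\Delta t$ and $\int_0^T h(x)x^\Delta\,\Delta t$ are one-signed along every admissible $x$, with exactly the signs prescribed by the hypotheses $(-1)^jg>0$ and the monotonicity of $(-1)^jh$; these are precisely the signs that make $(-1)^j\int_0^T[h(x(t))x^\Delta(t)+g(x(t))]\,\Delta t<0$ hold for every $j$ and every such $x$. This is the hypothesis of the preceding Theorem, whose application then provides at least $n$ distinct $T$-periodic solutions of (\ref{eq3}). I expect the main obstacle to be the friction term of the third paragraph: unlike the continuous case it does not drop out, and controlling its sign hinges on the chain-rule comparison above combined with the orbit confinement of the first step, which guarantees that $(-1)^jh$ is monotone exactly where it is evaluated.
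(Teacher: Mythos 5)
Your proposal is correct and follows essentially the same route as the paper: confine the orbit to $(\alpha_j-\frac{aT}2,\alpha_j+\frac{aT}2)$ using $\|x^\Delta\|_\infty<a$ together with periodicity, and then control $\int_0^T h(x)x^\Delta\,\Delta t$ by comparison with $(\mathcal{H}\circ x)^\Delta$ via the time-scales chain rule --- the paper isolates exactly this step as Lemma \ref{monot-cond}, using the mean-value form of the chain rule where you use the integral (P\"otzsche) form, but the content is the same. One small correction: the hypotheses $(-1)^jg>0$ and $(-1)^jh$ nonincreasing yield $(-1)^j\int_0^T[h(x)x^\Delta+g(x)]\,\Delta t>0$, i.e.\ the \emph{reversed} inequality of condition (\ref{condicion1}), not the $<0$ you assert at the end; this is still sufficient because, as the remark following Theorem \ref{teo4} observes, the theorem holds with either sign (the relevant Brouwer degree becomes $(-1)^{j+1}$, which is still nonzero).
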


\begin{cor}
Assume  there exists  a strictly increasing sequence $\left\{\alpha_{j}\right\}_{j=0}^n$ and constants $\gamma_j$ such that
 
 \begin{center}
    $a|h(x)-\gamma_j|<(-1)^jg(x)$  
     for all $x\in (\alpha_j-\frac {aT}2,\alpha_{j}+\frac {aT}2)$. 
 \end{center}

 Then, for any continuous $T$-periodic function $p(t)$ with mean value zero, problem (\ref{eq3})  has at least $n$ different $T$-periodic solutions.    
\end{cor}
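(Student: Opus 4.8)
The plan is to derive this corollary from the main theorem by verifying its integral sign condition along every admissible function. Fix an index $j$ and let $x$ be any $C^1$, $T$-periodic function with $x(0)=\alpha_j$ and $\|x^\Delta\|_\infty<a$. Since the delta-integral satisfies the fundamental theorem of calculus, $\int_0^T x^\Delta(t)\,\Delta t=x(T)-x(0)=0$, so subtracting the constant $\gamma_j$ costs nothing:
\[
\int_0^T\big[h(x(t))x^\Delta(t)+g(x(t))\big]\,\Delta t=\int_0^T\big[(h(x(t))-\gamma_j)x^\Delta(t)+g(x(t))\big]\,\Delta t .
\]
The value of this rewriting is that it avoids any attempt to evaluate $\int_0^T h(x(t))x^\Delta(t)\,\Delta t$, which on a general time scale does not vanish (the time-scale chain rule carries a graininess correction); the term $(h-\gamma_j)x^\Delta$ will instead be controlled pointwise.

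First I would localize the trajectory. Writing $x(t)-\alpha_j=\int_0^t x^\Delta\,\Delta s=-\int_t^T x^\Delta\,\Delta s$ (the second equality uses $x(T)=x(0)=\alpha_j$) and adding the two representations gives $2|x(t)-\alpha_j|\le\int_0^T|x^\Delta(s)|\,\Delta s<aT$, hence $x(t)\in(\alpha_j-\frac{aT}2,\alpha_j+\frac{aT}2)$ for every $t$. Consequently the hypothesis $a|h(x(t))-\gamma_j|<(-1)^jg(x(t))$ holds all along $x$, and in particular $(-1)^jg(x(t))>0$.

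Next comes the pointwise domination. Combining $|x^\Delta(t)|<a$ with $|h(x(t))-\gamma_j|<\frac{(-1)^jg(x(t))}{a}$ yields
\[
\big|(h(x(t))-\gamma_j)x^\Delta(t)\big|=|h(x(t))-\gamma_j|\,|x^\Delta(t)|<\frac{(-1)^jg(x(t))}{a}\cdot a=(-1)^jg(x(t)),
\]
so that $(-1)^j\big[(h(x(t))-\gamma_j)x^\Delta(t)+g(x(t))\big]>0$ at every $t$. Multiplying the identity of the first step by $(-1)^j$ and integrating then shows that $(-1)^j\int_0^T[h(x(t))x^\Delta(t)+g(x(t))]\,\Delta t$ keeps a strict constant sign for every admissible $x$; since $(-1)^jg$ alternates with the parity of $j$, this reproduces exactly the alternating integral sign condition in the hypothesis of the main theorem. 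The existence of $n$ distinct $T$-periodic solutions of (\ref{eq1}) then follows from that theorem.

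The one genuinely delicate step is the first: unlike the continuous Li\'enard equation, where $\int_0^T h(x)x'\,dt=0$ lets one discard the friction term outright, on a time scale this integral is generally nonzero, so the $\gamma_j$-shift is what makes the argument work — it reduces the whole question to dominating $(h-\gamma_j)x^\Delta$ by $g$, after which the estimate is routine. I expect no obstruction in the remaining steps, noting only that each $\gamma_j$ may be chosen independently of $x$, so the sign condition holds uniformly over all admissible functions, as the main theorem requires.
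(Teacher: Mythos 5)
Your proof is correct and follows essentially the same route as the paper: the $\gamma_j$-shift exploiting $\int_0^T \gamma_j x^\Delta(t)\,\Delta t=0$, the localization $x(t)\in(\alpha_j-\frac{aT}{2},\alpha_j+\frac{aT}{2})$, and the pointwise domination of $(h(x)-\gamma_j)x^\Delta$ by $(-1)^j g(x)$ to verify condition (\ref{condicion1}) of the main theorem. Your explicit derivation of the localization bound (adding the two integral representations of $x(t)-\alpha_j$) is a slightly cleaner packaging of the estimate the paper delegates to the proof of Corollary \ref{monot}, but the substance is identical.
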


The proof of the preceding results  shall be based on the search for fixed points of an appropriate compact operator defined  on the Banach space of all continuous $T$-periodic functions  on $\mathbb{T}$. 
The singular nature of $\varphi$ will be of help in the obtention of the required a priori bounds, thus making possible a Leray-Schauder degree approach. 
{We highlight  that, in contrast with the continuous case, the  
treatment of Li\'enard-like equations on time scales is more delicate because the average of the 
term $h(x(t))x^\Delta(t)$ with $T$-periodic $x$ is not necessarily equal to $0$. This is due to the fact that the standard chain rule does not hold and, consequently,  extra conditions are required in order to avoid this difficulty.  }  

\medskip

The paper is organized as follows. In Section 2, we set the notation,
terminology, and  several preliminary results which will be used throughout this paper.  In Section $3$,  we adapt Mawhin's continuation theorem to the context of times scales in order to prove  the existence of at least one  $T$-periodic solution of (\ref{eq1}). 
In Section 4, we prove our main theorem 
{with the help of the arguments introduced in the preceding section}. Some examples illustrating the results are presented in Section 5.



\section{Notation and preliminaries}
\label{S:-1}

 For fixed $T>0$, we shall assume that $\mathbb T$ is $T$-periodic, that is, $\mathbb T + T=\mathbb T$. Moreover, since the equation includes a delay $r\ge 0$, we shall also assume that $\mathbb T -r \subset \mathbb T$. When $r>0$, it is observed that
 if $\mathbb T\ne \mathbb R$, then $r$ is necessarily commensurable with $T$, that is, $r=qT$ for some positive $q\in \mathbb Q$. Indeed, this is due to the fact that, otherwise, the set $\{e^{-2\pi \frac rT ni}\}_{n\in\mathbb N}$ is dense in $S^1\subset \mathbb C$ and the conclusion follows from the fact that $\mathbb T$ is closed.
 Note, also, that if $r$ is congruent to $\hat r$ modulo $T$, then $x(t-r)=x(t-\hat r)$ for any $T$-periodic function $x$ and, thus, we may assume without loss of generality that $r<T$.  
 For convenience, we shall also assume that
 $0\in \mathbb{T}$. 
 
Let us denote by  $C_{T}=C_{T}\left(\mathbb{T},\mathbb{R} \right)$ the Banach space of 
 all  continuous $T$-periodic functions  on $\mathbb{T}$  endowed with the uniform norm 
$\left\| x \right\|_{\infty}=\displaystyle  \sup _{\mathbb{T}} |x(t)| = \sup _{\left[0,T\right]_{\mathbb{T}}} |x(t)|$ 
and 
the closed subspace
 \begin{center}
 $\tilde{C_{T}}=\left\{x\in C_{T}:\int_{0}^{T} x(s)\Delta s=0\right\} $.
 \end{center}
For an element $x\in C_T$ its maximum and minimum values shall be denoted respectively by $x_M$ and $x_m$. 

Moreover, denote by $C^{1}_{T}=C^{1}_{T}\left(\mathbb{T},\mathbb{R} \right)$  the Banach space  of all continuous $T$-periodic functions on $\mathbb{T}$ that are $\Delta$-differentiable with continuous
$\Delta$-derivatives, endowed with the usual norm
 $$\left\|x\right\|_{1}= \sup _{\left[0,T\right]_{\mathbb{T}}} |x(t)|+ \displaystyle \sup _{\left[0,T\right]_{\mathbb{T}}}\left|x^{\Delta}(t)\right|.$$

We introduce the following operators and functions:

 \begin{itemize}
     \item 
The  \textit{Nemytskii operator} $N_f:C^{1}_{T} \rightarrow {C_{T}} $, given by
\begin{center}
$N_f (z)(t)=f(t,x(t),x^{\Delta}(t), x(t-r))$, 
\end{center}
where  $f:\mathbb{T}\times \mathbb{R}^3  \rightarrow \mathbb{R}$  is a continuous function;

\item The  \textit{integration operator}   $H:\tilde{C_{T}} \rightarrow  C^{1}_{T}$, 
\begin{center}
$ H(z)(t)=\int_{0}^t z(s)\Delta s$, 
\end{center}
\noindent
 \item The  continuous linear projectors:
\begin{center}
$Q:C_{T}\rightarrow C_{T}, \ \  Q(x)(t)=\frac{1}{T}\int_{0}^{T} x(s)\Delta s$,
\end{center}
\begin{center}
$P:C_{T} \rightarrow C_{T}, \ \  P(x)(t)=x(0)$ 
\end{center}
where, for convenience, the isomorphism between $\mathbb R$ and the subspace of constant functions of $C_T$ is omitted. 
 \end{itemize}

\medskip

The above equation  (\ref{eq1}) can be written as follows:
\begin{equation}\label{eq3}
(\varphi(x^{\Delta}(t)))^{\Delta} = f(t,x(t),x^{\Delta}(t), x({t-r})),  \ \ \ t\in \mathbb{T},
\end{equation}

A function $x\in C^{1}_{T}$ is said to be a solution of (\ref{eq3}) if $\varphi(x^{\Delta})$  is  of class $ C^{1}$ and  verifies
$(\varphi(x^{\Delta}(t)))^{\Delta} = f(t,x(t),x^{\Delta}(t), x({t-r}))$ for all $t\in \mathbb{T}$.

\medskip

The following  lemma is an adaptation of a result of \cite{ma1}  to time scales. 
\begin{lem}\label{lem1}
 For  each $x\in C_{T}$,  there exists a unique  $Q_{\varphi}=Q_{\varphi}(x)\in \left[x_{m}, x_{M} \right]$  such that
$$\int_{0}^{T} \varphi^{-1}(x(t)-Q_{\varphi}(x))\Delta t = 0.$$
Moreover, the function    $Q_{\varphi}:C_{T} \rightarrow \mathbb{R}$ is continuous  and sends bounded sets into bounded sets.
\end{lem}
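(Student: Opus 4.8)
The plan is to fix $x\in C_T$ and study, as a function of the single real variable $s$, the quantity
$$G_x(s):=\int_0^T \varphi^{-1}(x(t)-s)\,\Delta t,$$
which is well defined for every $s\in\mathbb{R}$ because $\varphi^{-1}:\mathbb{R}\to(-a,a)$ is a globally defined increasing homeomorphism with $\varphi^{-1}(0)=0$. Since $\varphi^{-1}$ is continuous and $\Delta$-integration over $[0,T]_{\mathbb{T}}$ is a bounded positive linear functional, $G_x$ is continuous; moreover, because $\varphi^{-1}$ is \emph{strictly} increasing, any $s_1<s_2$ forces $\varphi^{-1}(x(t)-s_1)>\varphi^{-1}(x(t)-s_2)$ for every $t$, and integrating yields $G_x(s_1)>G_x(s_2)$. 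Hence $G_x$ is strictly decreasing, and in particular injective.

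Existence and uniqueness of $Q_\varphi(x)$ then follow from the intermediate value theorem. Since $x$ is continuous and $T$-periodic it attains its extrema $x_m$ and $x_M$ on the compact set $[0,T]_{\mathbb{T}}$; choosing $s=x_m$ gives $x(t)-x_m\ge 0$, whence $\varphi^{-1}(x(t)-x_m)\ge 0$ and $G_x(x_m)\ge 0$, while $s=x_M$ gives $G_x(x_M)\le 0$. By continuity there is a zero in $[x_m,x_M]$, and strict monotonicity makes it the unique zero of $G_x$ on all of $\mathbb{R}$. This defines $Q_\varphi(x)\in[x_m,x_M]$, and the inclusion immediately yields $|Q_\varphi(x)|\le\|x\|_\infty$, so that $Q_\varphi$ maps bounded sets to bounded sets.

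For continuity I would argue sequentially. Write $\Phi(x,s):=\int_0^T\varphi^{-1}(x(t)-s)\,\Delta t$, viewed now as a map $C_T\times\mathbb{R}\to\mathbb{R}$, and suppose $x_n\to x$ in $C_T$. The values $Q_\varphi(x_n)\in[(x_n)_m,(x_n)_M]$ form a bounded sequence because $\{x_n\}$ is uniformly bounded, so it suffices to show that every convergent subsequence $Q_\varphi(x_{n_k})\to s^*$ has limit $Q_\varphi(x)$. Along such a subsequence the arguments $x_{n_k}(t)-Q_\varphi(x_{n_k})$ converge to $x(t)-s^*$ uniformly in $t$ and remain in a fixed compact set; using the uniform continuity of $\varphi^{-1}$ there, together with the fact that the $\Delta$-integral is continuous under uniform convergence, we obtain $0=\Phi(x_{n_k},Q_\varphi(x_{n_k}))\to\Phi(x,s^*)$, i.e. $G_x(s^*)=0$. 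Uniqueness of the zero forces $s^*=Q_\varphi(x)$, and since the whole bounded sequence has this as its only subsequential limit, $Q_\varphi(x_n)\to Q_\varphi(x)$.

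The routine parts are the monotonicity and the sign evaluations at the endpoints $x_m,x_M$. The step requiring the most care is the passage to the limit inside the $\Delta$-integral in the continuity argument: since $\varphi^{-1}$ is assumed to be merely a homeomorphism, with no Lipschitz or differentiability hypothesis, one cannot differentiate $G_x$ and must instead rely on the uniform continuity of $\varphi^{-1}$ on the relevant compact set together with the bound $\bigl|\int_0^T u(t)\,\Delta t\bigr|\le T\|u\|_\infty$ for the $\Delta$-integral.
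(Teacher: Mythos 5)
Your proposal is correct and follows essentially the same route as the paper: the auxiliary function $G_x(s)=\int_0^T\varphi^{-1}(x(t)-s)\,\Delta t$, existence via the sign change between $s=x_m$ and $s=x_M$, uniqueness from the strict monotonicity of $\varphi^{-1}$, the bound $|Q_\varphi(x)|\le\|x\|_\infty$, and a sequential subsequence argument for continuity. The only cosmetic difference is that you justify the passage to the limit inside the $\Delta$-integral by uniform continuity of $\varphi^{-1}$ on a compact set, whereas the paper invokes the dominated convergence theorem on time scales; both are valid.
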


\begin{proof} 
Let $x\in C_{T}$ and define the continuous application $G_{x}: \left[x_{m}, x_{M}\right] \rightarrow \mathbb{R}$  by
\begin{center}
$G_{x}(s)=\int_{0}^{T}\varphi^{-1}(x(t)-s)\Delta t$.
\end{center}
We claim that the equation
\begin{equation}\label{eq4}
G_{x}(s)=0
\end{equation}
has a unique solution  $Q_{\varphi}(x)$. Indeed, 
Let  $r,  s \in \left[x_{m}, x_{M}\right] $ be such that
\begin{center}
$\int_{0}^{T} \varphi^{-1}(x(t)-r)\Delta t=0 =\int_{0}^{T} \varphi^{-1}(x(t)-s)\Delta t$,
\end{center}
then using the fact  that  $\varphi^{-1}$ is strictly increasing we deduce that  $r=s$. 
Moreover,  
It is seen that
\begin{center}
$\int_{0}^{T} \varphi^{-1}(x(t)-x_M)\Delta t\le 0 \le \int_{0}^{T} \varphi^{-1}(x(t)-x_m)\Delta t$,
\end{center}
whence
\begin{center}
$G_{x}(x_{m})G_{x}(x_{M})\leq0$.
\end{center}
Thus, there exists  $s\in\left[x_{m}, x_{M}\right]$ such that  $G_{x}(s)=0$, that is,  equation (\ref{eq4}) has a unique solution. It follows that 
function $Q_{\varphi}: C_{T} \rightarrow \mathbb{R} $ 
given by $Q_\varphi(x)=s$ is well defined and, furthermore, 
because $s\in [x_m,x_M]$ we deduce that   
\begin{center}
$\left|Q_{\varphi}(x)\right|\leq \left\|x\right\|_{\infty}$.
\end{center}
Therefore, the function  $Q_{\varphi}$  sends bounded sets into bounded sets.

Finally, let us verify that  $Q_{\varphi}$ is continuous on $C_{T}$. Let $\left(x_{n}\right)_{n}\subset C_{T}$  be a sequence such 
that $x_{n}\rightarrow x$ in $C_{T}$. 
Since the function  $Q_{\varphi}$  sends bounded sets into bounded sets, the sequence   $\left(Q_{\varphi}(x_{n})\right)_{n}$ is bounded in $\mathbb R$ and, consequently, 
without loss of generality we may  assume that it converges to some $\tilde a$. 
Because 
\begin{center}
$  \int_{0}^{T} \varphi^{-1}(x_{n}(t)-Q_{\varphi}(x_{n}))\Delta t=0 $
\end{center}
for all $n$, by the  dominated convergence theorem on time scales \cite{bo1}, we deduce that
\begin{center}
$ \int_{0}^{T} \varphi^{-1}(x(t)-\widetilde{a})\Delta t=0$,
\end{center}
so  $Q_{\varphi}(h)=\widetilde{a}$. Thus, we conclude that the function  $Q_{\varphi}$ is continuous.
\end{proof}

\medskip
Now, we define a fixed point operator, which is similar to the one employed 
in  \cite{ma1} (see also  \cite{pablo} for an elementary introduction). In order to transform problem (\ref{eq3}) into a fixed point problem we  use  the operators $H,Q, N_{f}, P$ and  Lemma \ref{lem1}. The proof of this result is similar to the continuous case and shall not repeated here.

\begin{lem}\label{lem2}
$x \in C^{1}_{T}$ is a solution of  (\ref{eq3}) if and only if  $x$ is a fixed point of the operator   $M_{f}$ defined  on $C^{1}_{T}$ by
\begin{center}
$M_{f}(x)=P(x)  + Q(N_{f}(x)) + H\left( \varphi^{-1} \left[H(N_{f}(x)- Q(N_{f}(x)))-Q_{\varphi}(H(N_{f}(x)-Q(N_{f}(x))))\right] \right)$.
\end{center}
\end {lem}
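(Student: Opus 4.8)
The plan is to prove both implications by unravelling the definition of $M_f$ one operator at a time, using only the antiderivative property on time scales, $H(z)^{\Delta}=z$ for $z\in\tilde{C_T}$, together with the defining identity and the uniqueness provided by Lemma \ref{lem1}. No chain rule is needed, so despite the time-scale setting the computation parallels the continuous case treated in \cite{ma1}.

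For the forward implication, I would suppose $x\in C^1_T$ solves (\ref{eq3}), that is $(\varphi(x^{\Delta}))^{\Delta}=N_f(x)$. Since $x$ is $T$-periodic and of class $C^1$, the function $\varphi(x^{\Delta})$ is $T$-periodic; integrating the equation over $[0,T]_{\mathbb T}$ then gives $\int_0^T N_f(x)\,\Delta t=0$, i.e. $Q(N_f(x))=0$, so the middle summand of $M_f$ drops out and $N_f(x)-Q(N_f(x))=N_f(x)$. Setting $w:=H(N_f(x)-Q(N_f(x)))\in C^1_T$ and integrating the equation from $0$ to $t$ yields $\varphi(x^{\Delta}(t))=\varphi(x^{\Delta}(0))+w(t)$, hence $x^{\Delta}(t)=\varphi^{-1}(w(t)-c)$ with $c:=-\varphi(x^{\Delta}(0))$. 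Because $x$ is $T$-periodic, $\int_0^T x^{\Delta}\,\Delta t=x(T)-x(0)=0$, so $\int_0^T\varphi^{-1}(w(t)-c)\,\Delta t=0$, and by the uniqueness in Lemma \ref{lem1} this forces $c=Q_{\varphi}(w)$. Integrating $x^{\Delta}=\varphi^{-1}(w-Q_{\varphi}(w))$ once more and restoring the constant $x(0)=P(x)$ reproduces exactly the three summands of $M_f(x)$, giving $x=M_f(x)$.

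For the converse, I would suppose $x=M_f(x)$ and first apply $P$ to both sides. Since $P$ fixes constants while $P\circ H=0$ (because $H(z)(0)=0$), this gives $P(x)=P(x)+Q(N_f(x))$, whence $Q(N_f(x))=0$ and $N_f(x)\in\tilde{C_T}$. Writing $w:=H(N_f(x))$, the fixed-point identity becomes $x(t)=x(0)+\int_0^t\varphi^{-1}(w(s)-Q_{\varphi}(w))\,\Delta s$. Taking the $\Delta$-derivative yields $x^{\Delta}=\varphi^{-1}(w-Q_{\varphi}(w))$, so $\varphi(x^{\Delta})=w-Q_{\varphi}(w)$, which is $C^1$; differentiating once more and using $w^{\Delta}=N_f(x)$ gives $(\varphi(x^{\Delta}))^{\Delta}=N_f(x)$, which is precisely (\ref{eq3}).

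The step I expect to be the main obstacle is the correct selection and justification of the integration constant. The operator is engineered so that $Q_{\varphi}$ picks out exactly the constant making $\int_0^T\varphi^{-1}(w-Q_{\varphi}(w))\,\Delta t=0$, which is simultaneously the condition guaranteeing that the outer $H$ is applied to an element of $\tilde{C_T}$ and that $M_f(x)$ is genuinely $T$-periodic; verifying this consistency is what makes the fixed-point reformulation legitimate. In the forward direction the delicate point is matching this constant to $-\varphi(x^{\Delta}(0))$ through the uniqueness in Lemma \ref{lem1}, while everything else is routine bookkeeping with the projectors $P$, $Q$ and the operator $H$.
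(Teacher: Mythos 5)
Your argument is correct and is exactly the standard reduction from \cite{ma1} that the paper invokes when it omits the proof as ``similar to the continuous case'': integrate the equation twice, use periodicity to kill $Q(N_f(x))$, and identify the integration constant $-\varphi(x^{\Delta}(0))$ with $Q_{\varphi}(w)$ via the uniqueness in Lemma \ref{lem1}. The only point worth making explicit is that the uniqueness you invoke is global in $c\in\mathbb{R}$ (which follows from the strict monotonicity of $\varphi^{-1}$, or by noting that $\int_0^T\varphi^{-1}(w-c)\,\Delta t=0$ already forces $c\in[w_m,w_M]$), since Lemma \ref{lem1} as stated only asserts uniqueness within $[w_m,w_M]$.
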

 As the function   $f$ 
 is continuous, using  the Arzel\`a-Ascoli theorem it is not difficult to see that $M_f$ is completely continuous.

Using Lemma \ref{lem2}, the existence of a $T$-periodic solution for (\ref{eq3}) is reduced to the obtention of  fixed points of the operator $M_{f}$. To this end, we will use topological degree theory.

Consider the following family of problems defined for
$\lambda \in \left[0,1\right]$:
\begin{equation}\label{eq5}
 (\varphi(x^{\Delta}(t)))^{\Delta}=\lambda N_{f}(x)(t)+(1-\lambda)Q(N_{f}(x)),
\end{equation}
 where the operator  $N_{f}$ is defined by
 \begin{center}
$N_{f}(x)(t)= f(t,x(t),x^{\Delta}(t), x{(t-r)}):=-h(x(t))x^{\Delta}(t)-g(x(t-r)) + p(t),  \ \ \ t\in \mathbb{T}$. 
\end{center}
 
 For  each 
 $ \lambda \in [0,1]$, consider the  nonlinear operator   $M(\lambda,\cdot)$, where $M$   is defined on $[0,1]\times C^{1}_{T}$  by 
 
\begin{align}\label{eq6}
 M(\lambda,x)&=P(x)+Q(N_{f}(x)) + \\
 &H(\varphi^{-1}\left[\lambda H( N_f (x)-Q(N_{f}(x)))-  Q_{\varphi}( \lambda H(N_{f}(x)-Q(N_{f}(x)))) \right]).\nonumber
\end{align}
 
 Observe  that $M(1, x)=M_{f}$; moreover, similarly as above, it is easy
to see that  $M$ is completely continuous and that, for $\lambda>0$, 
the existence of solution to equation  (\ref{eq5}) is equivalent to the problem
\begin{center}
$x=M(\lambda, x)$.
\end{center} 
We claim that the previous assertion is true also for $\lambda=0$. Indeed, because $Q_\varphi(c)= c$ for any constant $c$, it is clear that 
$M(0,x)=P(x) + Q(N_f(x))$. 
If $x=M(0,x)$ then $x$ is constant and $x=P(x)$, that is, $Q(N_f(x))=0$ and 
(\ref{eq5}) with $\lambda =0$ is trivially satisfied. Conversely, if 
$ (\varphi(x^{\Delta}(t)))^{\Delta}\equiv Q(N_{f}(x))$ then we obtain, upon integration,
$\int_0^T Q(N_{f}(x))\Delta t=0$ which, in turn, implies that $Q(N_{f}(x))=0$. 
Thus $x^\Delta$ is constant and, by periodicity,
$x^\Delta\equiv 0$, that is, $x$ is constant and, consequently, $x=P(x)=P(x) + Q(N_{f}(x))=M(0,x)$.

\begin{rem}\label{re1}
It is worthy to notice that, for any $\lambda \in [0,1]$,  if $x$ is a fixed point of $M$  then   $Q(N_{f}(x))= 0$.
\end{rem}

 
 \section{Continuation theorem}

In this section, we establish the continuation theorem that shall be employed for 
the proof of our main result. Let us denote by $deg_{B}$ and  $deg_{LS}$ the Brouwer and 
Leray-Schauder degrees respectively. 
The following result is obtained as in the continuous case; 
we include a proof for the sake of completeness. 

\begin{thm}\label{teo1}
Assume that  $\Omega$ is an open bounded set in  $C^{1}_{T}$ such that the following conditions hold:
\begin{enumerate}
\item For each $ \lambda \in (0,1)$ the problem
\begin{equation}\label{eq7}
(\varphi(x^{\Delta}(t)))^{\Delta} = \lambda N_{f}(x)
\end{equation}
has no solution on $\partial\Omega$.
\item  The equation
\begin{center}
$g(y)=0$,
\end{center}
has no solution on $\partial\Omega \cap \mathbb{R}$, where we consider the natural identification  of  $\mathbb R$ with the subspace of constant functions of $C^{1}_{T}$.
\item  The Brouwer degree of $g$ satisfies: 	
\begin{center}
$deg_{B}(g,\Omega \cap  \mathbb{R},0)\neq 0$.
\end{center}
\end{enumerate}
Then 
problem (\ref{eq1}) has at least one $T$-periodic  solution. 
\end {thm}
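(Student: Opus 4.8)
The plan is to realize the $T$-periodic solutions of (\ref{eq1}) as fixed points of the completely continuous operator $M(1,\cdot)=M_f$ and to compute the Leray--Schauder degree $deg_{LS}(I-M(1,\cdot),\Omega,0)$ by deforming the homotopy parameter down to $\lambda=0$, where the operator becomes finite-dimensional. The whole argument rests on two ingredients: first, that the homotopy $x\mapsto M(\lambda,x)$ defined in (\ref{eq6}) is admissible on $\partial\Omega$ for every $\lambda\in[0,1]$, so that homotopy invariance applies; and second, an explicit evaluation of the degree at $\lambda=0$ which, by hypothesis (3), is nonzero.

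First I would translate the three hypotheses into statements about fixed points of $M(\lambda,\cdot)$. The key observation is Remark \ref{re1}: any fixed point of $M(\lambda,\cdot)$ satisfies $Q(N_{f}(x))=0$, so that (\ref{eq5}) collapses to (\ref{eq7}). Conversely, integrating (\ref{eq7}) over $[0,T]$ and using the periodicity of $\varphi(x^{\Delta})$ gives $\lambda T\,Q(N_{f}(x))=0$, whence for $\lambda\in(0,1)$ the solutions of (\ref{eq7}) are exactly the fixed points of $M(\lambda,\cdot)$; thus hypothesis (1) says precisely that $M(\lambda,\cdot)$ has no fixed point on $\partial\Omega$ for $\lambda\in(0,1)$. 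For $\lambda=0$ I would compute, for a constant function $x\equiv c$, that $N_{f}(c)(t)=-g(c)+p(t)$, hence $Q(N_{f}(c))=-g(c)$ since $\overline p=0$; therefore $M(0,c)=c-g(c)$, and a fixed point of $M(0,\cdot)$ forces $x$ to be constant with $g(c)=0$. Hypothesis (2) then rules out fixed points on $\partial\Omega\cap\R$, and since every fixed point of $M(0,\cdot)$ is constant, there are none on $\partial\Omega$.

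The endpoint $\lambda=1$ is handled by a dichotomy: if $M_f$ has a fixed point on $\partial\Omega$, that point is already a $T$-periodic solution of (\ref{eq1}) and we are done; otherwise $M(\lambda,x)\neq x$ for every $(\lambda,x)\in[0,1]\times\partial\Omega$, the homotopy is admissible, and invariance of the Leray--Schauder degree yields
$$deg_{LS}(I-M(1,\cdot),\Omega,0)=deg_{LS}(I-M(0,\cdot),\Omega,0).$$
Finally I would evaluate the right-hand side. Since $M(0,\cdot)=P+Q\circ N_{f}$ takes values in the one-dimensional subspace of constant functions, the reduction property of the Leray--Schauder degree identifies it with a Brouwer degree on $\R$; by the computation above the restriction $(I-M(0,\cdot))|_{\R}$ coincides with $g$, so that $deg_{LS}(I-M(0,\cdot),\Omega,0)=deg_{B}(g,\Omega\cap\R,0)\neq 0$ by hypothesis (3). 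The existence property of the degree then produces a fixed point of $M_f$ in $\Omega$, that is, a solution of (\ref{eq1}).

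The main obstacle I anticipate is the rigorous justification that the fixed points of $M(\lambda,\cdot)$ coincide with the solutions of (\ref{eq7}) on the open interval $\lambda\in(0,1)$ --- in particular the converse direction, which requires the integration argument together with $\lambda>0$ --- and a careful invocation of the finite-dimensional reduction property, checking that every fixed point of $M(0,\cdot)$ genuinely lies in the subspace of constants so that the passage to the Brouwer degree is legitimate. The remaining steps are the standard bookkeeping of degree theory.
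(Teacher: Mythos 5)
Your proposal is correct and follows essentially the same route as the paper: translate the three hypotheses into the statement that $M(\lambda,\cdot)$ has no fixed point on $\partial\Omega$ for any $\lambda\in[0,1]$ (using the equivalence of (\ref{eq5}) and (\ref{eq7}) for $\lambda>0$, the characterization of fixed points of $M(0,\cdot)$ as constants $c$ with $g(c)=0$, and the dichotomy at $\lambda=1$), then apply homotopy invariance and the reduction property to identify the degree with $deg_{B}(g,\Omega\cap\mathbb{R},0)\neq 0$. The explicit computation $M(0,c)=c-g(c)$ and the integration step giving $\lambda T\,Q(N_{f}(x))=0$ are exactly the ingredients the paper uses.
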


\begin{proof} 
  Let  $ \lambda \in (0,1]$. If $x$ is  a solution of (\ref{eq7}), then $Q(N_{f}(x))=0$, hence  $x$ is a solution of problem (\ref{eq5}). On the other hand, for $ \lambda \in (0,1]$, let  $x$ be a solution  of (\ref{eq5}) and since
\begin{center}
$ Q\left(\lambda N_{f}(x)+(1-\lambda)Q(N_{f}(x))\right)=Q(N_{f}(x))$,
\end{center}
it follows that $Q(N_{f}(x))=0$, whence  $x$ is a solution  of (\ref{eq7}). It is deduced that, for  $ \lambda \in (0,1]$, problems  (\ref{eq5}) and  (\ref{eq7}) have the same solutions. We assume that  (\ref{eq5}) has no  solutions on $\partial\Omega$ for  $\lambda=1$, since otherwise we are done with the proof. It follows that (\ref{eq5}) has no solutions  for  $ (\lambda, x) \in  (0,1]\times \partial\Omega$. 
If $x$ is a solution of (\ref{eq5}) for 
$\lambda=0$,  then  
we conclude as before that $Q(N_f(x))=0$ and 
$x(t)\equiv b\in \mathbb R$. 
Thus, 
 using the fact that $\int_0^T p(t)\Delta t=0$ 
\begin{center}
$0= \displaystyle \frac{1}{T}\int_0^T f(t,b,0,b)\Delta t=-g(b)$,
\end{center}
which, together with hypothesis 2, implies that $b \notin \partial\Omega$. 

Summarizing,  we proved that  (\ref{eq5}) has no solution on  $\partial\Omega$ for all  $ \lambda \in \left[0, 1\right]$. Thus, for each  $ \lambda \in \left[0, 1\right]$, the  Leray-Schauder degree  $deg_{LS}(I-M(\lambda, \cdot), \Omega, 0)$ is well defined and, by the homotopy invariance  property, 
\begin{center}
$deg_{LS}(I-M(0,\cdot),\Omega,0)= deg_{LS}(I-M(1,\cdot),\Omega,0)$.
\end{center}
On the other hand, 
\begin{center}
$deg_{LS}(I-M(0,\cdot),\Omega,0) = deg_{LS}(I-(P+QN_{f}) ,\Omega,0)$.
\end{center}
But the range of the mapping  
\begin{center}
$z \mapsto P(z)+QN_{f}(z)$
\end{center}
is contained in the subspace  of constant functions of $C^{1}_{T}$, identified with $\mathbb{R}$. Thus, using the reduction property  of the Leray-Schauder degree \cite{man7, man12}
\begin{align*}
 deg_{LS}(I-(P+QN_{f}) ,\Omega,0) &= deg_{B}\left(I-(P+QN_{f})\left|_{\overline{\Omega \cap  \mathbb{R}}}\right. ,\Omega\cap \mathbb{R}, 0\right)\\
&=deg_{B}(g,\Omega\cap  \mathbb{R},0)\neq 0.
\end{align*}
Then, $deg_{LS}(I-M(1,\cdot),\Omega,0)\neq 0$ and,
in consequence,  there exists  $x\in\Omega$ such that  $M_{f}(x)=M(1,x)=x$, which is a 
solution  of (\ref{eq3}) and therefore  a solution  of (\ref{eq1}).
\end{proof} 

\medskip

With the help of Theorem \ref{teo1} we shall be able to prove the existence of fixed points of $M_{f}$
With this aim, for $\lambda\in (0,1]$ we consider the equation 
\begin{equation}\label{eq10}
\left(\varphi\left(x^{\Delta}(t)\right)\right)^{\Delta} + \lambda h(x(t))x^{\Delta}(t)+ \lambda g(x(t-r)) =\lambda p(t)  \ \ \ t\in \mathbb{T},
\end{equation}
which is the explicit expression of problem (\ref{eq7}).

{The  next example shows that the  $ \int_{0}^{T}h(x(t))x^{\Delta}(t)\Delta t$  is not necessarily equal to zero.   This is due to the fact that the standard chain rule does  not hold for time scales.}
{
 \begin{exa}\label{ej5}
  Let $\mathbb T$ be $3$-periodic with   $[0,3]_\mathbb{T} = [0,1] \cup  \left\{ 2, 3\right\}$, let $h(x)=x$, and let $x:\mathbb{T} \rightarrow \mathbb{R}$ be the $3$-periodic function defined on $[0,3)_\mathbb T$ by
 $$ x(t)= \left\{ \begin{array}{lcc}
             t &   if  & 0\leq t\leq 1 \\
             \\ 2 &  if & t=2. 
             \end{array}
   \right.$$
  It follows by direct computation that $
  \int_{0}^{3}x(t)x^{\Delta}(t)\Delta t=-\frac 52$. 
  \end{exa}}
   
\medskip

\begin{lem}\label{monot-cond}
Assume that $h$ is nondecreasing (resp. nonincreasing) over the range of $x\in C^1_T$. Then
$$\int_0^T h(x(t))x^\Delta(t)\Delta t \le 0 \quad (\hbox{resp.} \ge 0).
$$

\end{lem}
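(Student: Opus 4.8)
The plan is to establish the sign of $\int_0^T h(x(t))x^\Delta(t)\,\Delta t$ by introducing a primitive of $h$ and exploiting the time-scale analogue of the chain rule, which gives an inequality rather than an equality. Let $H$ denote an antiderivative of $h$, so that $H'=h$ on $\mathbb{R}$ in the ordinary sense. The first step is to recall the chain-rule-type estimate on time scales: for a $C^1$ function $H$ and a $\Delta$-differentiable $x$, one has the identity
$$
(H\circ x)^\Delta(t)=\left(\int_0^1 H'\bigl(x(t)+s\mu(t)x^\Delta(t)\bigr)\,ds\right)x^\Delta(t),
$$
where $\mu$ is the graininess function. The key point, and the reason the standard chain rule fails (as Example \ref{ej5} illustrates), is that the integrand is evaluated at the intermediate points $x(t)+s\mu(t)x^\Delta(t)$ rather than simply at $x(t)$.

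The main idea is then to compare $(H\circ x)^\Delta(t)$ with $h(x(t))x^\Delta(t)$ using the monotonicity of $h$. Suppose $h$ is nondecreasing over the range of $x$. When $x^\Delta(t)\ge 0$, the argument $x(t)+s\mu(t)x^\Delta(t)$ is at least $x(t)$ for all $s\in[0,1]$ (since $\mu(t)\ge 0$), so monotonicity gives $H'\bigl(x(t)+s\mu(t)x^\Delta(t)\bigr)\ge h(x(t))$, and multiplying by the nonnegative factor $x^\Delta(t)$ preserves the inequality, yielding $(H\circ x)^\Delta(t)\ge h(x(t))x^\Delta(t)$. When $x^\Delta(t)<0$, the argument is at most $x(t)$, so now $H'\bigl(x(t)+s\mu(t)x^\Delta(t)\bigr)\le h(x(t))$; multiplying by the negative factor $x^\Delta(t)$ reverses the inequality, and again one obtains $(H\circ x)^\Delta(t)\ge h(x(t))x^\Delta(t)$. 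Thus in both cases the pointwise inequality $(H\circ x)^\Delta(t)\ge h(x(t))x^\Delta(t)$ holds for all $t$.

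The final step is to integrate this pointwise inequality over one period. Since $x$ is $T$-periodic and $H$ is continuous, $H\circ x$ is $T$-periodic, so by the fundamental theorem of calculus on time scales $\int_0^T (H\circ x)^\Delta(t)\,\Delta t=H(x(T))-H(x(0))=0$. Combining this with the pointwise bound gives
$$
0=\int_0^T (H\circ x)^\Delta(t)\,\Delta t\ge \int_0^T h(x(t))x^\Delta(t)\,\Delta t,
$$
which is the desired conclusion. The nonincreasing case follows by applying the nondecreasing result to $-h$, or equivalently by reversing all the inequalities above.

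I expect the main obstacle to be pinning down the correct chain-rule identity on time scales and verifying its validity under the stated hypotheses; in particular one must be careful that $H'=h$ is only assumed continuous (not $C^1$), so the integral representation of $(H\circ x)^\Delta$ should be justified directly rather than by quoting a smooth chain rule. Once the integral representation with the intermediate evaluation points is in hand, the sign analysis is a routine case distinction on the sign of $x^\Delta(t)$, and the periodicity argument that kills the integral of the exact $\Delta$-derivative is immediate.
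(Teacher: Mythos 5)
Your proof is correct and follows essentially the same route as the paper: introduce the primitive $\mathcal H$ of $h$, show the pointwise inequality $(\mathcal H\circ x)^\Delta(t)\ge h(x(t))x^\Delta(t)$ via monotonicity, and integrate over a period. The only cosmetic difference is that you invoke the P\"otzsche chain rule in its integral form, whereas the paper applies the mean value theorem at right-scattered points to produce an intermediate value $\xi$ between $x(t_0)$ and $x(\sigma(t_0))$ --- the two formulations are equivalent here since the averaged argument in your integral ranges over exactly that segment.
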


\begin{proof}
Consider the primitive of $h$, namely 
\begin{center}
$\mathcal H(x)=\int_{0}^{x}h(s)ds$
\end{center}
and observe that $\mathcal H\circ x$ is $\Delta$-differentiable. 
Moreover, if $t_0$ is right dense, then  $(\mathcal H \circ x)^\Delta(t_0)= h(x(t_0))x^\Delta(t_0)$. On the other hand, if $t_0$ is right-scattered, then
$$
(\mathcal H \circ x)^\Delta(t_0)= h(\xi) x^\Delta(t_0)
$$
for some $\xi$ between $x(t_0)$ and $x(\sigma(t_0))$. If $h$ is nondecreasing over the range of $x$, it readily follows that 
$$(\mathcal H \circ x)^\Delta(t_0) \ge  h(x(t_0)) x^\Delta(t_0)
$$
and, consequently,
$$
\int_0^T h(x(t))x^\Delta(t)\Delta t \le  \int_0^T (\mathcal H\circ x)^\Delta(t)\Delta t =0. 
$$
The opposite inequality is obtained if we assume, instead, that $h$ is nonincreasing.  
\end{proof}

 
 \section{Multiplicity of periodic solutions}

In this section
we establish the existence of at least $n$ different solutions of problem (\ref{eq1}).  The statements in the introduction are repeated here, for the sake of clarity. 

\begin{thm}\label{teo4}
Assume that there exists  a strictly increasing sequence $\left\{\alpha_{j}\right\}_{j=0}^n$ 
such that for all $j$ and 
$x\in C_T^1$,
\begin{equation}
    \label{condicion1}
    (-1)^j\int_0^T [h(x(t))x^{\Delta}(t)+g(x(t))]\Delta t < 0   \hbox{ if  } \; x(0)= \alpha_{j}, \;    \left\|x^{\Delta}\right\|_{\infty} < a. 
\end{equation}
 Then, for any continuous $T$-periodic function $p(t)$ with mean value zero, problem (\ref{eq1})  has at least 
   $n$ different   
 $T$-periodic solutions.
\end {thm}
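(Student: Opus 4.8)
The plan is to apply the continuation theorem (Theorem \ref{teo1}) on $n$ pairwise disjoint open sets, one straddling each gap of the sequence $\{\alpha_j\}$, and to read off one solution from each, distinguished by the value of $x(0)$. Concretely, for $j=1,\dots,n$ I would set
$$\Omega_j=\{x\in C^1_T:\ \alpha_{j-1}<x(0)<\alpha_j,\ \|x^\Delta\|_\infty<a\}.$$
Each $\Omega_j$ is open, and it is bounded in $C^1_T$: the constraint $\|x^\Delta\|_\infty<a$ bounds the $\Delta$-derivative directly, while $x(t)=x(0)+\int_0^t x^\Delta(s)\Delta s$ together with $|x(0)|\le\max\{|\alpha_{j-1}|,|\alpha_j|\}$ gives $\|x\|_\infty\le\max\{|\alpha_{j-1}|,|\alpha_j|\}+aT$. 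Since the intervals $(\alpha_{j-1},\alpha_j)$ are disjoint, so are the $\Omega_j$.

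Two structural facts drive the argument. First, any solution $x$ of (\ref{eq7}) for $\lambda\in(0,1]$ automatically satisfies $\|x^\Delta\|_\infty<a$, because $\varphi$ is defined only on $(-a,a)$ and $x^\Delta$ is continuous on the compact set $[0,T]_\mathbb{T}$, so its maximum modulus is attained inside the open interval. Second, such a solution obeys the averaged identity
$$\int_0^T\big[h(x(t))x^\Delta(t)+g(x(t))\big]\Delta t=0.$$
Indeed, integrating (\ref{eq7}) and using $T$-periodicity of $\varphi(x^\Delta)$ yields $\int_0^T N_f(x)\Delta t=0$, i.e. $Q(N_f(x))=0$ (cf. Remark \ref{re1}); combining this with $\overline p=0$ and the translation identity $\int_0^T g(x(t-r))\Delta t=\int_0^T g(x(t))\Delta t$, valid for $T$-periodic $x$ on the $T$-periodic time scale, gives the displayed equality.

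With these in hand I would verify the three hypotheses of Theorem \ref{teo1} for each $\Omega_j$. For condition (1), a point of $\partial\Omega_j$ either has $\|x^\Delta\|_\infty=a$, in which case it cannot solve (\ref{eq7}) at all by the first fact above, or has $x(0)\in\{\alpha_{j-1},\alpha_j\}$ with $\|x^\Delta\|_\infty\le a$; in the latter case a solution would have $\|x^\Delta\|_\infty<a$ and hence, by (\ref{condicion1}) with the corresponding index $i\in\{j-1,j\}$, $(-1)^i\int_0^T[h(x)x^\Delta+g(x)]\Delta t<0$, contradicting the averaged identity. Thus (\ref{eq7}) has no solution on $\partial\Omega_j$. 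For conditions (2) and (3), observe that $\Omega_j\cap\mathbb{R}=(\alpha_{j-1},\alpha_j)$, and evaluating (\ref{condicion1}) at the constant function $x\equiv\alpha_i$ (for which $x^\Delta\equiv0$) gives $(-1)^i g(\alpha_i)<0$. In particular $g(\alpha_{j-1})\ne0\ne g(\alpha_j)$, so condition (2) holds, and $g(\alpha_{j-1})$, $g(\alpha_j)$ carry opposite signs, whence $deg_B(g,(\alpha_{j-1},\alpha_j),0)=\pm1\ne0$, giving condition (3).

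Theorem \ref{teo1} then produces, for each $j$, a solution $x_j$ of (\ref{eq1}); since the relevant degree is nonzero the fixed point actually lies in $\Omega_j$, so $x_j(0)\in(\alpha_{j-1},\alpha_j)$. As these intervals are disjoint, $x_1,\dots,x_n$ are pairwise distinct, yielding $n$ different $T$-periodic solutions. The crux of the proof is the boundary analysis in condition (1): it is precisely the singular (relativistic) nature of $\varphi$ that supplies the a priori bound $\|x^\Delta\|_\infty<a$ for free, making the sign hypothesis (\ref{condicion1}) applicable and letting it clash with the averaged identity, while the delay and the failure of the standard chain rule on time scales are absorbed entirely into that identity through the translation invariance of the $\Delta$-integral.
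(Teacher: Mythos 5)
Your proposal is correct and follows essentially the same route as the paper: the same sets $\Omega_j$ (up to an index shift), exclusion of solutions from $\partial\Omega_j$ via the singularity of $\varphi$ and the sign condition (\ref{condicion1}) applied at $x(0)=\alpha_i$, and a nonzero Brouwer degree of $g$ on $(\alpha_{j-1},\alpha_j)$ coming from $(-1)^i g(\alpha_i)<0$. The only cosmetic difference is that you invoke Theorem \ref{teo1} as a black box while the paper re-runs the homotopy-invariance computation explicitly on each $\Omega_j$ (obtaining $\deg_B(g,(\alpha_j,\alpha_{j+1}),0)=(-1)^j$ rather than just $\pm 1$); your extra care with the translation identity for the delay term and with the boundary case $\|x^\Delta\|_\infty=a$ makes explicit what the paper leaves implicit.
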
  

\begin{proof} 
Assume that 
 $x\in C^{1}_{T}\left(\mathbb{T},\mathbb{R} \right)$ is  a solution of (\ref{eq10}) with $ \lambda \in (0,1]$, then  $ | x^{\Delta}(t)| < a$   and
 $$\int_0^T [h(x(t))x^{\Delta}(t)+g(x(t-\tau))]\Delta t =0.
 $$
From the periodicity of $x$ we deduce from (\ref{condicion1}) that
 $x(0)\neq \alpha_j$, for any $j=0,\ldots, n$. Moreover, 
 (\ref{condicion1}) for $x\equiv \alpha_j$ also implies that 
$(-1)^jg(\alpha_j)<0$.
 Therefore, problem (\ref{eq5})  has  no  solution  in   $\partial\Omega_j$ for all $j=0,\ldots, n-1$, where 
\begin{center}
$\Omega_j:=\left\{x\in C^{1}_{T}\left(\mathbb{T},\mathbb{R}\right)/ x(0)\in (\alpha_j, \alpha_{j+1}), \   \left\|x^{\Delta}\right\|_{\infty} < a \right\}$.
\end{center}

From the homotopy invariance of the Leray-Schauder degree, we obtain
\begin{align*}
 deg_{LS}(I-M(1,\cdot),\Omega_j,0)&= deg_{LS}(I-M(0,\cdot),\Omega_j,0)=\\
& = deg_{LS}(I-(P+QN_{f}) ,\Omega_j,0)=\\
& = deg_{B}\left(I-(P+QN_{f})\left|_{\overline{\Omega_j \cap  \mathbb{R}}}\right. ,\Omega_j\cap \mathbb{R}, 0\right)=\\
&= deg_{B}(g,\Omega_j\cap  \mathbb{R},0)=\\
&= deg_{B}(g,(\alpha_{j}, \alpha_{j+1}),0) =(-1)^j.
\end{align*}
We conclude that the operator $M(1,\cdot)=M_{f}$ has a fixed point $x_j\in \Omega_j$.  
Finally, observe that   
$x_j(0)\in (\alpha_j, \alpha_{j+1})$ 
hence all the solutions are different. 
\end{proof}


\begin{rem}
 It is clear that the sign in condition (\ref{condicion1}) may be reversed, that is: 
 $$(-1)^{j}\int_0^T [h(x(t))x^{\Delta}(t)+g(x(t))]\Delta t > 0   \hbox{ if  } \; x(0)= \alpha_{j}, \;    \left\|x^{\Delta}\right\|_{\infty} < a. $$
\end{rem}

The next corollary shows that condition (\ref{condicion1}) can be obtained from 
  appropriate  explicit assumptions on $g$ and $h$. 

\begin{cor}\label{monot}
Assume that   there exists  a strictly increasing sequence $\left\{\alpha_{j}\right\}_{j=0}^n$ such that
\begin{center}
$(-1)^j g>0$ and $(-1)^jh$  is nonincreasing over $(\alpha_j-\frac {aT}2,\alpha_{j}+\frac {aT}2)$.    
\end{center}         
 Then, for any continuous $T$-periodic function $p(t)$ with mean value zero, problem (\ref{eq3}) has at least  $n$ different $T$-periodic solutions. 
\end{cor}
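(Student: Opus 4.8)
The plan is to deduce the corollary directly from Theorem \ref{teo4} by checking that the monotonicity and sign assumptions on $h$ and $g$ force condition (\ref{condicion1}) to hold, up to the sign reversal allowed by the remark following that theorem. So I would fix an index $j$ and an arbitrary $x\in C^1_T$ with $x(0)=\alpha_j$ and $\|x^\Delta\|_\infty<a$, and reduce everything to confining the range of such an $x$ inside the interval $(\alpha_j-\frac{aT}{2},\alpha_j+\frac{aT}{2})$ on which the hypotheses are posed.

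First I would establish the geometric estimate $x_M-x_m<\frac{aT}{2}$. Choosing $t_m,t_M\in[0,T)_{\mathbb{T}}$ with $x(t_m)=x_m$ and $x(t_M)=x_M$ and writing $x(t)-x(s)=\int_s^t x^\Delta(\tau)\Delta\tau$, I would traverse one full period from $t_m$ to $t_M$ and back, so that, closing the loop by $T$-periodicity, $2(x_M-x_m)\le\int_0^T|x^\Delta(t)|\Delta t\le T\|x^\Delta\|_\infty<aT$. Since $\alpha_j=x(0)\in[x_m,x_M]$ and this interval has length less than $\frac{aT}{2}$, it follows that $[x_m,x_M]\subset(\alpha_j-\frac{aT}{2},\alpha_j+\frac{aT}{2})$; that is, the range of $x$ lies where the assumptions are available.

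Next I would collect the signs. In both parities $(-1)^j h$ is nonincreasing on the range of $x$, so Lemma \ref{monot-cond} (applied to $h$ when $j$ is even and, after a sign flip, to $h$ nondecreasing when $j$ is odd) gives $(-1)^j\int_0^T h(x(t))x^\Delta(t)\Delta t\ge 0$; meanwhile $(-1)^j g>0$ on the range of $x$ yields $(-1)^j\int_0^T g(x(t))\Delta t>0$, since $\int_0^T\Delta t=T>0$ and $g$ has a strict sign there. Adding the two I obtain
$$(-1)^j\int_0^T\bigl[h(x(t))x^\Delta(t)+g(x(t))\bigr]\Delta t>0,$$
which is precisely the reversed form of (\ref{condicion1}) noted in the remark after Theorem \ref{teo4}. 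That theorem then produces the $n$ distinct $T$-periodic solutions of (\ref{eq1}), equivalently of (\ref{eq3}).

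I expect the only genuinely delicate point to be the total-variation bound $x_M-x_m<\frac{aT}{2}$ on a time scale: one must apply the fundamental theorem of calculus and the periodic closing of the loop carefully when $t_m$ or $t_M$ is right-scattered, so that no contribution from the graininess is overlooked in the inequality $2(x_M-x_m)\le\int_0^T|x^\Delta|\Delta t$. Everything else is bookkeeping of signs, with the strictness of the final inequality guaranteed by $T>0$ together with the strict sign of $g$.
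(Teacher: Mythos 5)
Your proposal is correct and follows essentially the same route as the paper: reduce to Theorem \ref{teo4} (in the sign-reversed form allowed by the remark) by combining Lemma \ref{monot-cond} with the strict sign of $g$, after confining the range of any admissible $x$ inside $(\alpha_j-\frac{aT}{2},\alpha_j+\frac{aT}{2})$. The only cosmetic difference is in the confinement step: you bound the oscillation via $2(x_M-x_m)\le\int_0^T|x^\Delta|\Delta t<aT$, whereas the paper bounds $|x(t)-\alpha_j|$ directly by deriving the contradiction $t>\frac{T}{2}$ and $T-t>\frac{T}{2}$; both rest on exactly the same use of $\|x^\Delta\|_\infty<a$ and periodicity.
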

{\begin{proof}
 From the previous proof and Lemma \ref{monot-cond} it suffices to verify that
if $x\in C^{1}_{T}$ satisfies $x(0)=\alpha_j$ and $\|x^\Delta\|_\infty<a$, then 
$x(t)\in (\alpha_j-\frac {aT}2,\alpha_{j}+\frac {aT}2)$ for all $t$. 
To this end, observe that if $|x(t)- \alpha_j| \ge \frac {aT}2$ for some $t\in (0,T)_\mathbb T$, then 
$$
\frac {aT}2\le |x(t)- \alpha_j| \le \int_0^t |x^\Delta(s)|\Delta s < at,
$$
whence $t>\frac T2$. Due to the periodicity, we also deduce that $T-t > \frac T2$, a contradiction. 
\end{proof}
}

\begin{rem}\label{longitud} In particular, the conditions in the previous theorem imply that
$\alpha_{j+1}- \alpha_{j}\ge aT $    for $j=0,1, \ldots, n-1$.
\end{rem}

The alternative condition that $h$ is locally close to a constant is obtained in the following corollary: 

\begin{cor}
\label{cor4}
Assume  there exists  a strictly increasing sequence $\left\{\alpha_{j}\right\}_{j=0}^n$ and constants $\gamma_j$ such that

 \begin{center}
    $a|h(x)-\gamma_j|<(-1)^jg(x)$  
     for all $x\in (\alpha_j-\frac {aT}2,\alpha_{j}+\frac {aT}2)$. 
 \end{center}

 Then, for any continuous $T$-periodic function $p(t)$ with mean value zero, problem (\ref{eq3})  has at least $n$ different $T$-periodic solutions.    
\end{cor}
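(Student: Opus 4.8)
The plan is to deduce this corollary from Theorem~\ref{teo4}, verifying the reversed form of condition~(\ref{condicion1}) supplied by the remark following that theorem, namely that
\[
(-1)^{j}\int_0^T \big[h(x(t))x^{\Delta}(t)+g(x(t))\big]\,\Delta t > 0
\]
whenever $x(0)=\alpha_j$ and $\|x^\Delta\|_\infty<a$. First I would reuse the confinement argument from the proof of Corollary~\ref{monot}: since $x(0)=\alpha_j$ and $\|x^\Delta\|_\infty<a$, periodicity forces $x(t)\in(\alpha_j-\tfrac{aT}2,\alpha_j+\tfrac{aT}2)$ for every $t$, so the standing hypothesis $a|h(x)-\gamma_j|<(-1)^jg(x)$ holds along the entire trajectory of $x$. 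In particular this already yields $(-1)^jg(x(t))>0$ for all $t$.

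The next step is to remove the constant $\gamma_j$ from the $h$-term. Writing $h(x)=\gamma_j+(h(x)-\gamma_j)$ and using periodicity together with the fundamental theorem on time scales, the contribution $\gamma_j\int_0^T x^\Delta(t)\,\Delta t=\gamma_j\big(x(T)-x(0)\big)=0$ drops out, so that
\[
(-1)^{j}\int_0^T h(x(t))x^{\Delta}(t)\,\Delta t=(-1)^{j}\int_0^T \big(h(x(t))-\gamma_j\big)x^{\Delta}(t)\,\Delta t.
\]
Consequently the quantity to be estimated equals $A+B$, where $B:=(-1)^j\int_0^T g(x(t))\,\Delta t>0$ and, by the triangle inequality, $A:=(-1)^j\int_0^T (h(x(t))-\gamma_j)x^\Delta(t)\,\Delta t$ satisfies $|A|\le \int_0^T |h(x(t))-\gamma_j|\,|x^\Delta(t)|\,\Delta t=:C$.

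Finally I would establish the pointwise strict bound $|h(x(t))-\gamma_j|\,|x^\Delta(t)|<(-1)^jg(x(t))$ for every $t$: when $h(x(t))\neq\gamma_j$ it follows from $|x^\Delta(t)|<a$ combined with the hypothesis, and when $h(x(t))=\gamma_j$ the left side vanishes while the right side is positive. Since the integrand $(-1)^jg(x(t))-|h(x(t))-\gamma_j|\,|x^\Delta(t)|$ is continuous and strictly positive on the compact set $[0,T]_{\mathbb T}$, it attains a positive minimum and its $\Delta$-integral is strictly positive, giving $C<B$; hence $A+B\ge -|A|+B\ge -C+B>0$, which is exactly the reversed condition. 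Theorem~\ref{teo4} then produces the $n$ distinct $T$-periodic solutions. The only delicate point is the careful bookkeeping of the strict inequalities through the triangle inequality and the integration over the time scale, and in particular the need to confine the trajectory to the prescribed interval so that the pointwise hypothesis on $g$ and $h$ may be invoked; this confinement is precisely what the argument borrowed from Corollary~\ref{monot} provides.
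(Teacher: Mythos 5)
Your proof is correct and takes essentially the same approach as the paper: confine the trajectory to $(\alpha_j-\frac{aT}{2},\alpha_j+\frac{aT}{2})$ by the argument of Corollary \ref{monot}, cancel the $\gamma_j$-term via $\gamma_j\int_0^T x^\Delta(t)\,\Delta t=0$, and integrate the pointwise bound coming from $a|h(x)-\gamma_j|<(-1)^jg(x)$ to obtain the reversed form of condition (\ref{condicion1}). The paper merely compresses your $A+B$ bookkeeping into the single pointwise inequality $(h(x(t))-\gamma_j)x^\Delta(t)+g(x(t))\ge g(x(t))-a|h(x(t))-\gamma_j|>0$ before integrating.
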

 \begin{proof}
Suppose for example that $x$ is a solution of (\ref{eq10}) with $j$ even and
$x(0)=\alpha_j$. Because 
  $\|x^\Delta\|_\infty < a$ and $x(t)\in (\alpha_j-\frac {aT}2,\alpha_{j}+\frac {aT}2)$, then 
  $$(h(x(t))-\gamma_j) x^\Delta(t) + g(x(t)) \ge 
  g(x(t)) - a|h(x(t))-\gamma_j | >0.$$ 
Since $\int_0^T\gamma_j x^\Delta(t)\Delta t=0$, we deduce that  
$$\int_0^T [h(x(t))x^\Delta(t)  + 
g(x(t-\tau))]\Delta t=\int_0^T [h(x(t))x^\Delta(t)  + 
g(x(t))]\Delta t >0.
$$
An analogous reasoning for $j$ odd shows that condition (\ref{condicion1}) is fulfilled. 

 \end{proof}
 
\begin{rem}\label{infinitely}
In particular, 
suppose that $g$ has slow oscillations, that is,  there exists a sequence of zeros $x_j\nearrow +\infty$ such that 
$(-1)^jg(x) > 0$ for $x\in (x_j,x_{j+1})$, with $x_{j+1}-x_j> aT$, then the problem has infinitely many solutions, provided  that $(-1)^jh$ is nonincreasing or
$a|h(x)|<|g(x)|$ in $(\alpha_j-\frac {aT}2, \alpha_j+\frac {aT}2)$ for all $j$, where $\alpha_j=\frac {x_j+x_{j+1}}2$.

\end{rem}

 \section{Examples} 
 
 In order to illustrate the above results, we consider some examples.
 \begin{exa}\label{eje1}
  Let us consider the equation
 \begin{equation}\label{eq21}
\left(\frac{x^{\Delta}(t)}{\sqrt{1-x^{\Delta }(t)^2}}\right)^{\Delta} + e^{-x^{2}(t)}x^{\Delta}(t)+  \arctan (x(t))= \sin(4\pi t)  \ \ \ t\in \mathbb{T}\ \ \ 
\end{equation}
 where $\mathbb T$ is a $1/2$-periodic time scale with 
 $$\left[0,1/2\right]_{\mathbb{T}}= \left[0,1/8\right] \cup  \left\{3/16\right\}  \cup \left\{1/4\right\} \cup  \left[5/16,3/8\right]  \cup \left[7/16,1/2\right].$$ 
 By Corollary \ref{monot} or Corollary \ref{cor4} with $\alpha_0 \ll 0\ll \alpha_1$
 we deduce that  (\ref{eq21}) has at least one $1/2$-periodic solution. 
\end{exa}

\begin{exa}\label{eje2} 
 Let $h:\mathbb{R} \rightarrow \mathbb{R}$ be continuous. Let us study the existence of a $2\pi$-periodic solution to the
following problem
 \begin{equation}\label{eq22}
 \left(\frac{x^\Delta(t)}{\sqrt{1-\frac{x^{\Delta }(t)^2}{c^{2}}}}\right)^\Delta + h(x(t))x^\Delta(t)+ x^{3}(t-r)= \cos(t),  \ \ \ t\in \mathbb{R},
\end{equation}
 where $c>0$ and $r\geq 0$. Using Corollaries \ref{monot} and \ref{cor4}, it follows that problem (\ref{eq22}) has at least one $2\pi$-periodic solution if  one  of the following assumptions is verified:
 {\begin{enumerate}
     
     \item There exists $R>0$ such that
$$h(y) \le h(x) \qquad \hbox{for}\,\, y\ge x\ge R \, \hbox{ or }\,  y\le x\le -R.
$$
     \item  $\limsup_{x\to \pm \infty} \left|\frac {h(x)}{x^3}\right| < 1$.
\end{enumerate}}
\end{exa}


 {
  \begin{exa}\label{eje4}
  Let us consider the relativistic pendulum equation on time scales
 \begin{equation}\label{eq25}
 \left(
 \frac {x^\Delta(t)}
 {\sqrt {1- \frac{x^\Delta(t)^2}{c^{2}} }}\right)^\Delta + h(x(t))x^\Delta(t)+ \sin(x(t)) =p(t) \ \ \ t\in \mathbb{T},
\end{equation}
where $h, p:\mathbb{R} \rightarrow \mathbb{R}$ are  continuous functions and $p$ is $T$-periodic  with mean value zero. 
If $cT\le \pi$, then problem (\ref{eq25}) has  infinitely many $T$-periodic  solutions under one of the following assumptions: 
\begin{enumerate}
    
\item 
 $(-1)^{j}h$ is nonincreasing in 
$(\alpha_j- \frac {cT}2, \alpha_j +\frac {cT}2)$, where $\alpha_j= (2j+1)\frac{\pi}{2}$ for $j\in \mathbb Z$. 
\item $c\left|h(x)+\gamma_j\right| < \left|\sin(x)\right|$ for 
$x\in (\alpha_j- \frac {cT}2, \alpha_j +\frac {cT}2)$ for $j\in \mathbb Z$ and some constants $\gamma_j$.  
\end{enumerate}
Clearly, both conditions are satisfied when $h$ is constant although, in this case, 
 the solutions are not
 necessarily different in geometric sense
  (see \cite{torres}). 
  {It is worth observing that the
 restriction $cT \le \pi$, which comes from Remark \ref{longitud}, improves the one in the original work by Torres, but it is slightly worse 
 than the one obtained in \cite{torres2} which, as mentioned in the introduction, reads $cT<2\sqrt 3=3.46\ldots$ However, the method in \cite{torres2} involves a change of variables that cannot be extended to a general time scale. The sharper bound  given in \cite{bjm} is easily obtained in the continuous case, due to the Sobolev inequality $$\|x-\overline x\|_\infty ^2 \le \frac T{12}\|x'\|_{L^2} ^2,
$$
which holds for $T$-periodic functions.
Indeed, it suffices to observe that, if we replace $P$ by $Q$ in the definition of the operator $M$ in (\ref{eq6}) then
our main theorem is also valid,  changing $x(0)$ by $\overline x$ in condition 
(\ref{condicion1}) and the definition of $\Omega$. Thus, any possible solution of (\ref{eq5}) satisfying for example $\overline x=\frac \pi 2$ verifies $|x(t)-\frac \pi 2| \le \frac{cT}{2\sqrt 3}$ for all $t$. If $cT\le \sqrt 3\pi$, then $x(t)\in [0,\pi]$ for all $t$ and 
$$0 = \int_0^T \sin(x(t))\, dt >0,
$$
a contradiction. For a general time scale, the argument is essentially the same and yields the condition $s(\mathbb T)c\sqrt T \le \frac \pi 2$, where $s(\mathbb T)$ is the constant of the corresponding Sobolev inequality. We recall that, in the continuous case, the obtention of the value $s(\mathbb R)=\frac T{12}$ relies 
on the Fourier series expansion for periodic functions  
(see e.g. \cite{maw}), which should be adapted accordingly to the general context. For example, a rapid computation shows, for arbitrary $\mathbb T$, that $s(\mathbb T)\le \frac T4$ which, applied to  this case, retrieves the condition $cT\le \pi$. 
} 
 \end{exa}}

\section*{Acknowledgements} 
This research was partially supported by  CONICET/Argentina and project
UBACyT 20020160100002B.

\bibliographystyle{plain}

\end{document}